\documentclass{amsart}
\usepackage{amssymb}

\usepackage[british,UKenglish,USenglish,american]{babel}
\usepackage{graphicx}           
\usepackage{fancyhdr}
\usepackage{rotating}
\usepackage{amsmath}
\usepackage{amssymb}
\usepackage{amsthm}
\usepackage{tikz}
\usepackage{url}
\usepackage{enumerate}
\usepackage{mathtools}
\usepackage{setspace}
\usepackage{color}
\usepackage[normalem]{ulem}
\usetikzlibrary{positioning}

\newtheorem{theorem}{Theorem}[section]

\newtheorem{fact}[theorem]{Fact}
\newtheorem*{theorem*}{Theorem}
\newtheorem*{maintheorem*}{Main Theorem}
\newtheorem*{lemma*}{Lemma}

\theoremstyle{definition}

\newtheorem{question}[theorem]{Question}
\newtheorem{definition}[theorem]{Definition}
\newtheorem{remark}[theorem]{Remark}
\newtheorem*{remark*}{Remark}

\newtheorem{claim}{Claim}
\newtheorem*{claim*}{Claim}





\title[Pseudofinite groups of finite centraliser dimension]{A note on pseudofinite groups of finite centraliser dimension}

\author[U. Karhum\"{a}ki]{Ulla Karhum\"{a}ki}
\address{University of Helsinki}
\email{ulla.karhumaki@helsinki.fi}

\thanks{The author is funded by the Finnish Science Academy grant no: 322795.}

\setcounter{tocdepth}{2}
\begin{document}
\maketitle

\begin{abstract} We give a structural theorem for pseudofinite groups of finite centraliser dimension. As a corollary, we observe that there is no finitely generated pseudofinite group  of finite centraliser dimension. 
\end{abstract}

\section{Introduction}

An infinite group is called \emph{pseudofinite} if every first-order sentence true in it also holds in some finite group or, equivalently, if it is elementarily equivalent (denoted by $\equiv$) to a non-principal ultraproduct of finite groups. 

Let $G$ be a group and $k$ be a positive integer. Then $G$ is said to be of \emph{centraliser dimension} $k$ if the longest proper descending chain of centralisers in $G$ has length $k$---this is denoted by ${\rm cd}(G)=k$. If such an integer $k$ exists, then $G$ is called a group of \emph{finite centraliser dimension}. 

By results of Wilson \cite{Wilson2009}, the solvable radical is uniformly definable in the class of \emph{finite} groups. Let ${\rm Rad}(H)$ be the solvable radical of a finite group $H$. Then, for a pseudofinite group $G \equiv \prod_{i\in I}G_i/U$, we have the well-defined definable normal subgroup $R(G)\equiv \prod_{i\in I} {\rm Rad}(G_i)/U$. By results of Khukhro \cite{Khukhro2009}, if $G$ is of centraliser dimension $k$ then the subgroup $R(G)$ is solvable of $k$-bounded derived length.

The \emph{socle} of a group $G$, denoted by ${\rm Soc}(G)$, is the subgroup generated by all minimal normal non-trivial subgroups of $G$.

We prove the following theorem.

\begin{theorem}\label{th:pseudofinite_fcd}Let $G \equiv \prod_{i\in I} G_i/U$ be a pseudofinite group with ${\rm cd}(G)=k$. Then $G$ has normal series of definable subgroups $$1 \unlhd R(G) \unlhd L \unlhd G,$$ where\begin{enumerate}
\item \label{radical} \emph{(Khukhro \cite{Khukhro2009}).} $R(G)$ is a solvable group of $k$-bounded derived length.
\item \label{pre-imge}$L$ is the full pre-image of $\overline{L} \equiv \prod_{i\in I}{\rm Soc}(G_i/{\rm Rad}(G_i))/U$ in $G$.
\item \label{socle}$\overline{L}=\overline{L}_1 \times \cdots \times \overline{L}_\ell$ is a direct product of $\ell$ many simple non-abelian groups $\overline{L}_j$, each of which is either finite or isomorphic to a (twisted) Chevalley group over a pseudofinite field. Also, $\ell$ depends only on $k$.
\item \label{centraliser}$C_{G/R(G)}(\overline{L})=1$.
\end{enumerate} 
\end{theorem}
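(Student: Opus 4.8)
The plan is to work in the ultraproduct $\prod_{i\in I}G_i/U$, which by hypothesis is elementarily equivalent to $G$, and transport the final structure back to $G$ at the end; since all subgroups in the statement are definable, elementary equivalence suffices. So assume $G = \prod_{i\in I}G_i/U$ outright. The starting point is \L{}o\'s's theorem together with Wilson's uniform definability of the solvable radical: the formula defining ${\rm Rad}(H)$ in finite groups $H$ defines $R(G) = \prod_i {\rm Rad}(G_i)/U$ in $G$, and part~\eqref{radical} is then exactly Khukhro's bound applied inside each $G_i$ (the chain condition ${\rm cd}\le k$ passes to the $G_i$ for $U$-almost all $i$, because having a descending centraliser chain of length $k+1$ is a first-order sentence). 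The quotient $\overline{G} := G/R(G)$ is then a pseudofinite group with trivial solvable radical, elementarily equivalent to $\prod_i \overline{G_i}/U$ where $\overline{G_i} = G_i/{\rm Rad}(G_i)$, and we still have ${\rm cd}(\overline G)\le k$ since centraliser dimension does not increase under taking quotients by definable normal subgroups in this setting (or simply: ${\rm cd}$ of a quotient of $G_i$ is at most ${\rm cd}(G_i)$).

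Next I would analyse $\overline{G_i}$, a finite group with trivial solvable radical. For such groups ${\rm Soc}(\overline{G_i})$ is a direct product of non-abelian simple groups and $C_{\overline{G_i}}({\rm Soc}(\overline{G_i})) = 1$, so $\overline{G_i}$ embeds into ${\rm Aut}({\rm Soc}(\overline{G_i}))$. The crucial uniformity is a bound on the number $\ell_i$ of simple factors: here is where finite centraliser dimension does the work. A product of $m$ non-abelian simple groups contains a descending chain of centralisers of length at least $m$ (centralise the factors one at a time: $C(L_1) = L_2\times\cdots\times L_m \supsetneq C(L_1,L_2) = L_3\times\cdots\times L_m\supsetneq\cdots$), so ${\rm cd}(\overline{G_i})\le k$ forces $\ell_i \le k$. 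Passing to a further subset of $I$ in $U$, we may assume $\ell_i = \ell$ is constant, $\ell\le k$. Then I would set $\overline L := \prod_i {\rm Soc}(\overline{G_i})/U$; this is definable (socle of a radical-free finite group is uniformly definable, e.g.\ as the product of components, using Wilson-type definability results, or one argues directly that "$x$ lies in the socle" is expressible uniformly once $\ell$ is fixed), it is normal in $\overline G$, and $\overline L = \overline L_1\times\cdots\times\overline L_\ell$ where $\overline L_j := \prod_i S_{i,j}/U$ for a coherent choice of indexing of the simple factors $S_{i,1},\dots,S_{i,\ell}$ of ${\rm Soc}(\overline{G_i})$ (re-index within $U$ so the isomorphism types are matched). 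By the classification of finite simple groups, each $S_{i,j}$ is alternating, sporadic, or a (twisted) Chevalley group; the sporadic and bounded-rank alternating possibilities give, on an ultrafilter set, a fixed finite simple group, hence $\overline L_j$ finite; otherwise $S_{i,j}$ is a Chevalley group of a fixed Lie type and rank over $\F_{q_i}$ (the Lie rank is bounded because the untwisted rank again gives centraliser chains, so it is constant on a $U$-set), and then $\overline L_j \cong \mathbf{G}(\prod_i \F_{q_i}/U)$ is the corresponding Chevalley group over a pseudofinite field --- this is a standard "ultraproduct commutes with the Chevalley-group functor" argument. The alternating groups ${\rm Alt}(n_i)$ with $n_i\to\infty$ are genuinely excluded, not merely absorbed, because ${\rm Alt}(n)$ has unbounded centraliser dimension as $n\to\infty$ (e.g.\ centralisers of products of disjoint transpositions), so finite centraliser dimension rules them out --- this subtlety is worth stating explicitly. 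This gives parts~\eqref{socle}, and \eqref{pre-imge} by letting $L$ be the preimage of $\overline L$ in $G$.

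For part~\eqref{centraliser}: in each $\overline{G_i}$ we have $C_{\overline{G_i}}({\rm Soc}(\overline{G_i})) = 1$ since $\overline{G_i}$ has trivial solvable radical, and this is a first-order statement ("the centraliser of the socle is trivial"), so by \L{}o\'s it transfers to $C_{\overline G}(\overline L) = 1$, i.e.\ $C_{G/R(G)}(\overline L) = 1$. Finally one checks $R(G)\unlhd L\unlhd G$ is a genuine series of definable normal subgroups of $G$: $R(G)$ and (its preimage) $L$ are definable by the above, normality of each term is inherited from normality of the corresponding subgroup in each finite factor via \L{}o\'s, and everything is pulled back to $G$ using $G\equiv\prod_i G_i/U$ and definability.

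I expect the main obstacle to be the uniform definability and the coherent indexing in part~\eqref{socle}: one must ensure that "being the socle" (equivalently, the set of components) is captured by a single formula across almost all $G_i$ --- this needs the fact that the number of components is bounded by $k$ together with a Wilson-style definability input --- and that the simple factors can be matched up across the ultrafilter so that each $\overline L_j$ is well-defined with a single isomorphism type (finite, or Chevalley of one fixed type over the pseudofinite field $\prod_i\F_{q_i}/U$). The ring-theoretic step that an ultraproduct of Chevalley groups $\mathbf{G}(\F_{q_i})$ of fixed type is $\mathbf{G}$ of the pseudofinite field $\prod_i \F_{q_i}/U$ is standard but should be cited carefully. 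Bounding the Lie rank (and excluding growing alternating groups) via centraliser chains is the conceptual heart and is short once set up.
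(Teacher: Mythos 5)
Your overall strategy is the same as the paper's (pass to the ultraproduct, bound the number of socle components of $G_i/{\rm Rad}(G_i)$ by centraliser chains, run the CFSG case analysis excluding sporadic groups, alternating groups of growing degree and unbounded Lie rank, identify the infinite components via Wilson--Ryten, and pull everything back to $G$), but two steps are genuinely gapped. First, your justification for controlling ${\rm cd}(\overline{G}_i)$ --- ``centraliser dimension does not increase under taking quotients\dots or simply: ${\rm cd}$ of a quotient of $G_i$ is at most ${\rm cd}(G_i)$'' --- is false: finite centraliser dimension is \emph{not} inherited by (definable) quotients, and there is no general inequality ${\rm cd}(H/N)\leqslant{\rm cd}(H)$. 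The paper stresses exactly this point; what it actually uses is the non-trivial result of Buturlakin and of Borovik--Karhum\"{a}ki (Fact~\ref{fact:buturlakin-socle}) that for a finite group $H$ with ${\rm cd}(H)=k$ the quotient $H/{\rm Rad}(H)$ has \emph{$k$-bounded} centraliser dimension. With that citation your counting of simple factors goes through (yielding $\ell$ bounded in terms of $k$, not necessarily $\ell\leqslant k$), but as written this crucial step rests on a wrong claim, and the chain-of-centralisers argument lives in the quotient, so you cannot avoid it by working in $G_i$ itself.

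Second, the uniform definability of the socle and its components, which you yourself flag as the main obstacle and defer to ``Wilson-type definability results'' or an unproved assertion that ``$x$ lies in the socle'' is uniformly expressible once $\ell$ is fixed, is precisely where the paper needs a specific tool: by Ellers--Gordeev (Theorem~\ref{fact:Ellers}), a finite simple group of Lie type over a field with more than $8$ elements is $CC$ for a single conjugacy class $C$, so each infinite component satisfies $\overline{S}_{j_i}=x_{j_i}^{\overline{G}_i}x_{j_i}^{\overline{G}_i}$ and is therefore uniformly definable with a parameter; definability of $\overline{L}$, normality, and the \L{}o\'{s} transfer you invoke for $C_{\overline{G}}(\overline{L})=1$ all hang on this, so the step cannot be left as a remark. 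Once definability is secured, your direct transfer of ``the centraliser of the socle is trivial'' is a legitimate (and slightly slicker) alternative to the paper's two-step argument that $C_{\overline{G}}(\overline{L})\cap\overline{L}=1$ and that a non-trivial normal subgroup of $\overline{G}$ must meet $\overline{L}$; and your identification of an ultraproduct of fixed-type, fixed-rank groups with a (twisted) Chevalley group over the pseudofinite field is consistent with the paper's appeal to Theorem~\ref{fact:Wilson-and-Ryten}.
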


In \cite{Tent-Macpherson2007}, Macpherson and Tent proved that if $G$ is a \emph{stable }pseudofinite group then the quotient $G/R(G)$ is finite; in this case, $R(G)$ is solvable as stable groups are of finite centraliser dimension. In \cite{Tent-Macpherson2012} they proved more generally that if $G$ is a pseudofinite group whose theory is $NIP$ then $G/R(G)$ is finite; here $R(G)$ needs not to be solvable. In \cite{Macpherson2018}, Macpherson proved a similar theorem to Theorem~\ref{th:pseudofinite_fcd}$(\ref{pre-imge})$-$(\ref{socle})$ for pseudofinite groups whose theory is $NTP_2$. These model-theoretic tameness assumptions (stable, ${\rm NIP}$, ${\rm NTP_2}$) on $G$ are inherited by definable quotients, such as $G/R(G)$. 

The property of being of finite centraliser dimension is \emph{not} inherited by (definable) quotients in general. However, if $H$ is a \emph{locally finite} group of centraliser dimension $k$ then the quotient $H/{\rm Rad}(H)$ is of $k$-bounded centraliser dimension. This fact is proven by Borovik and Karhum\"{a}ki \cite{Borovik-Karhumaki2019} and by Buturlakin \cite{Buturlakin2019} in their independent proofs describing the structure of locally finite groups of finite centraliser dimension. It immediately follows that if $G$ is a pseudofinite group of centraliser dimension $k$ then the quotient $G/R(G)$ is of $k$-bounded centraliser dimension.

In the proof of Theorem~\ref{th:pseudofinite_fcd} we use (the rather standard) techniques present (at least) in \cite{Tent-Macpherson2007, Tent-Macpherson2012, Macpherson2018, Borovik-Karhumaki2019}. Namely, we observe that if the definable quotient $G/R(G)$ (which is of finite centraliser dimension by \cite{Borovik-Karhumaki2019, Buturlakin2019}) is infinite, then it contain a definable normal subgroup isomorphic to a simple pseudofinite group.

It is an open question due to Sabbagh whether finitely generated pseudofinite groups exist. Neither solvable nor simple finitely generated pseudofinite groups exist (see e.g. \cite[Proposition 3.14]{Point-Houcine2013}). By invoking Theorem~\ref{th:pseudofinite_fcd}, we observe that no finitely generated pseudofinite group of finite centraliser dimension exists.

\begin{theorem}\label{th:finite-gene-fcd} There is no finitely generated pseudofinite group of finite centraliser dimension.
\end{theorem}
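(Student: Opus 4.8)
The plan is to derive a contradiction from the assumption that $G$ is a finitely generated pseudofinite group with ${\rm cd}(G)=k$, by invoking the structure theorem (Theorem~\ref{th:pseudofinite_fcd}) and the known non-existence results for finitely generated solvable and simple pseudofinite groups. The crucial observation is that finite generation is inherited by quotients: each of the definable quotients $G/R(G)$, $L/R(G)$, $G/L$, and so on, is again finitely generated. So the strategy is to walk down the normal series $1 \unlhd R(G) \unlhd L \unlhd G$ and show that every factor must be trivial or finite, forcing $G$ itself to be finite, contradicting pseudofiniteness (pseudofinite groups are infinite by definition).

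First I would handle $G/L$. Since $C_{G/R(G)}(\overline{L})=1$ by Theorem~\ref{th:pseudofinite_fcd}(\ref{centraliser}) and $\overline{L}=L/R(G)$, the group $G/R(G)$ embeds into ${\rm Aut}(\overline{L})$. Now $\overline{L}$ is a direct product of $\ell$ simple non-abelian groups, each finite or a (twisted) Chevalley group over a pseudofinite field, so ${\rm Aut}(\overline{L})$ is, modulo the finite-index subgroup of automorphisms preserving the factors, a product of automorphism groups of the individual factors extended by a finite permutation group on the $\ell$ factors; for a simple (twisted) Chevalley group over a pseudofinite field, the outer automorphism group is built from field automorphisms, diagonal automorphisms and graph automorphisms, and the relevant point is that $G/\overline{L}$-image inside $\mathrm{Out}(\overline L)$ is abelian-by-finite (in fact metabelian-by-finite), hence so is $G/L$; but a finitely generated abelian-by-finite (more generally, finitely generated virtually solvable) group is residually finite and not pseudofinite, yet being a quotient of the pseudofinite-like object it would need to satisfy the appropriate constraint — more cleanly, $G/L$ is a finitely generated virtually solvable group, and as such cannot be infinite pseudofinite; since $G/L$ need not a priori be pseudofinite itself, the cleaner route is to observe $G/L$ is finitely generated and virtually (meta)solvable, hence residually finite, and then push this information back. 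Next, $L/R(G) = \overline{L}$ is finitely generated; a finitely generated infinite simple (twisted) Chevalley group over a pseudofinite field does not exist (such fields, being infinite pseudofinite fields, are not finitely generated as fields, and the group determines the field up to finitely many choices), so each infinite factor is impossible and all factors $\overline{L}_j$ are finite, whence $\overline{L}$ is finite. Finally $R(G)$ is a finitely generated solvable group by Theorem~\ref{th:pseudofinite_fcd}(\ref{radical}), hence residually finite, and combining the residual finiteness of $R(G)$ and of $G/R(G)$ (which by the above is finite-by-(finitely generated virtually solvable), hence residually finite) yields that $G$ is residually finite; but an infinite residually finite finitely generated group is never pseudofinite (it has a decreasing chain of finite-index subgroups, detectable first-order, that no finite group satisfies) — here one uses the standard fact, cited as \cite[Proposition 3.14]{Point-Houcine2013} or provable directly, that a pseudofinite group cannot be finitely generated and residually finite. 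This contradiction completes the proof.

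I expect the main obstacle to be the control of $G/\overline{L}$ inside ${\rm Out}(\overline{L})$, i.e. establishing cleanly that the outer part is virtually solvable, together with making the final ``residually finite finitely generated $\Rightarrow$ not pseudofinite'' step rigorous: one must be careful that finite generation of $G$ genuinely descends to all the relevant sections and that the chain-of-subgroups obstruction to pseudofiniteness applies. An alternative, possibly slicker, packaging: note that $\overline{L}$ being finitely generated forces it finite (no infinite finitely generated simple group among the listed possibilities), so $L$ is finitely generated solvable-by-finite hence residually finite; then $G/L$ is finitely generated and embeds in the finite group ${\rm Out}(\overline{L}) \rtimes \mathrm{Sym}(\ell)$ up to the finitely generated solvable ``diagonal/field'' part, so $G/L$ is finitely generated virtually solvable hence residually finite; extensions of residually finite groups by finite groups, and finitely generated residually finite by residually finite with finitely generated quotient under control, give $G$ residually finite; and finally apply the cited non-existence of finitely generated residually finite (equivalently, by Sabbagh-type arguments, of finitely generated solvable or finitely generated linear) pseudofinite groups. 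The delicate point remains verifying that the ``field automorphism'' contribution, which for an infinite pseudofinite field could be large, does not arise precisely because we have already forced each $\overline{L}_j$ to be finite.
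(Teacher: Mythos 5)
Your overall skeleton (apply Theorem~\ref{th:pseudofinite_fcd}, rule out infinite simple factors of $\overline{L}$, then kill the solvable part) is the paper's, but two steps you rely on do not hold as stated. First, you claim that ``finite generation is inherited by quotients'' and list $L/R(G)=\overline{L}$ among the finitely generated sections. But $\overline{L}$ is \emph{not} a quotient of $G$: it is a normal subgroup of the quotient $\overline{G}=G/R(G)$, and finite generation does not pass to (normal) subgroups. The same problem affects your later assertions that $L$ and $R(G)$ are finitely generated ($R(G)$ only becomes finitely generated \emph{after} one knows it has finite index, i.e.\ after $\overline{G}$ is shown to be finite). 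This is exactly the point the paper bridges with Palac\'in's lemma (Fact~\ref{fact:Palacin}): after passing to the finite-index kernel of the permutation action on the factors, each infinite $\overline{L}_j$ is an infinite definable definably simple non-abelian \emph{normal} subgroup of the finitely generated pseudofinite group $\overline{G}$, hence itself finitely generated and pseudofinite, contradicting Fact~\ref{fact:Point-and-Houcine-fiitely-gen}. Your proposal has no substitute for this ingredient, so your conclusion that every $\overline{L}_j$ is finite is unsupported.

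Second, your endgame via residual finiteness rests on the claim that an infinite finitely generated residually finite group cannot be pseudofinite. That statement is not among the facts available here (the cited non-existence results are for solvable and for simple groups), and your sketched justification is not a first-order argument: possessing a descending chain of finite-index subgroups is not expressible in the group language, and finite groups trivially have such chains, so there is no sentence separating the two situations. The detour is also unnecessary: once $\overline{L}$ is finite, $C_{\overline{G}}(\overline{L})=1$ gives an embedding of $\overline{G}$ into the finite group ${\rm Aut}(\overline{L})$ (the paper instead uses Fact~\ref{lemma:socle}), so $R(G)$ is a definable subgroup of finite index, hence infinite, pseudofinite, finitely generated and solvable, contradicting Fact~\ref{fact:Point-and-Houcine-fiitely-gen} directly. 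Relatedly, your worries about ${\rm Out}(\overline{L})$ and field automorphisms of infinite factors are moot, since in the only case where that analysis is invoked the factors have already been forced to be finite; as written, the control of $G/L$ inside ${\rm Out}(\overline{L})$ for infinite $\overline{L}$ is neither needed nor actually established.
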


This note is organised as follows. In Section~\ref{Sec:pseudofinite groups} we give all necessary background results on (pseudo)finite groups. Then, in Section~\ref{Sec:Proofs}, we prove Theorem~\ref{th:pseudofinite_fcd} and Theorem~\ref{th:finite-gene-fcd}.

\section{Background results}\label{Sec:pseudofinite groups}

\subsection{Pseudofinite groups and ultraproducts} Pseudofinite groups arise in model theory\footnote{We omit the basics of model theory; for such detail we refer the reader to \cite{Ziegler-Tent2012}.} as groups satisfying the first-order properties that are true in all finite groups. Formally, the definition goes as follows.

\begin{definition}An infinite group $G$ (resp. structure $M$) is called \emph{pseudofinite} if every first-order sentence true in $G$ (resp. $M$) also holds in some finite group (resp. structure).\end{definition}

The group $(\mathbb{Z}, +)$ is an example of an infinite group which is \emph{not} pseudofinite; a first-order sentence expressing `if the map $x \mapsto 2x$ is injective then it is surjective' is true in all finite groups but false in $(\mathbb{Z}, +)$. Typical examples of pseudofinite groups are torsion-free divisible abelian groups, infinite extraspecial groups of exponent $p >2$ and rank $n$ and (twisted) Chevalley groups over pseudofinite fields.

\begin{remark}---\begin{itemize}
\item We always consider groups (resp. fields) in the pure group (resp. field) language. Also, definable means definable possibly with parameters.
\item Some authors allow pseudofinite groups to be finite. We require that a pseudofinite group is infinite. Therefore, given a pseudofinite group $G$, any definable subgroup of $G$ is either finite or pseudofinite. Moreover, given a definable normal subgroup $N$ of $G$, each of the groups $N$ and $G/N$ is either finite or pseudofinite (see \cite[Lemma 2.16]{Point-Houcine2013}).

\end{itemize} 
\end{remark}

Let $\{ M_i : i \in I \}$ be a family of $L$-structures, $U$ be a non-principal ultrafilter on the index set $I$ and  $M^*:= \prod_{i \in I} M_i$ be the Cartesian product of the $M_i$'s. We say that a property $\star$ holds \emph{for almost all $i$} if $\{i : \star \, \, {\rm holds \, \, for}  \, \, M_i \} \in U$. We fix $M=M^* / \sim_{U}$ for an equivalence relation $\sim_{U}$ on $M^*$ defined as: \[x \sim_{U} y \, \, \, {\rm if \, \, and \, \, only \, \, if} \, \, \, \{i \in I : x(i)=y(i)\} \in U,\]
where $x,y \in M^*$, and $x(i)$ and $y(i)$ denote the $i^{th}$ coordinate of $x$ and $y$, respectively. This is well-defined, and the resulting $L$-structure $M= \prod_{i \in I} M_i / U$ is called the \emph{ultraproduct} of the $L$-structures $M_i$ over the ultrafilter $U$. 

The famous \L o\'{s} theorem (see e.g. \cite[Exercise 1.2.4]{Ziegler-Tent2012}) states that a first-order expressible property is true in $M=\prod_{i \in I} M_i / U$ if and only if it is true in $M_i$, for almost all $i$. Using this, one easily observes that a structure is pseudofinite if and only if it is elementarily equivalent to a non-principal ultraproduct of finite structures of increasing sizes.

\subsubsection{Simple pseudofinite groups}In \cite{Ax1968}, Ax proved a beautiful purely algebraic characterisation for pseudofinite fields. While no such characterisation exists for pseudofinite groups, there is a tight correspondence between pseudofinite fields and simple pseudofinite groups; in \cite{Wilson1995}, Wilson proved that a simple pseudofinite group is elementarily equivalent to a (twisted) Chevalley group over a pseudofinite field. In \cite[Chapter 5]{Ryten2007}, Ryten proved that `elementarily equivalent' can be replaced by `isomorphic' in Wilson's result. As a (twisted) Chevalley group over a pseudofinite field is a simple pseudofinite group \cite{Point1999}, we have the following classification.

\begin{theorem}[Wilson \cite{Wilson1995} + Ryten {\cite{Ryten2007}}]\label{fact:Wilson-and-Ryten} A pseudofinite group is simple if and only if it is isomorphic to a (twisted) Chevalley group over a pseudofinite field.
\end{theorem}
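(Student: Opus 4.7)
The plan is to assemble three theorems from the literature, as the preceding paragraph indicates. For the ``if'' direction I would appeal to Point \cite{Point1999}: the Chevalley construction $F\mapsto X(F)$ is uniformly first-order interpretable in the base field, so whenever $F\equiv \prod_{i\in I}F_i/U$ with finite $F_i$ we have $X(F)\equiv \prod_{i\in I}X(F_i)/U$, whence pseudofiniteness of $X(F)$; simplicity is then the classical Chevalley fact transferred to the pseudofinite base field.

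For the ``only if'' direction, the substantive content is Wilson's theorem \cite{Wilson1995}. Given a simple pseudofinite group $G$ with $G \equiv \prod_{i\in I}H_i/U$ for finite $H_i$, the plan is to invoke the classification of finite simple groups, which limits the possibilities for the simple sections of the $H_i$ to: cyclic groups of prime order, the $26$ sporadic groups, alternating groups, and (twisted) Chevalley groups. The first two options are ruled out because $G$ is infinite and non-abelian. The crucial and most delicate step --- and the main obstacle --- is to eliminate the alternating family, which one does by producing a first-order sentence that holds in the simple pseudofinite group $G$ but fails in every ultraproduct of alternating groups $A_{n_i}$ with $n_i \to \infty$. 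Among the remaining Chevalley families, a rank/type-counting argument forces the Lie type to stabilise along $U$, yielding $G \equiv X(F)$ for some Chevalley type $X$ and some pseudofinite field $F$.

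Finally, to promote elementary equivalence to isomorphism one invokes Ryten \cite[Chapter~5]{Ryten2007}. The strategy is to exhibit a bi-interpretation between $X(F)$ and $F$ that is uniform across (pseudo)finite fields, so that an isomorphism between pseudofinite base fields --- guaranteed by Ax's classification of pseudofinite fields whenever the theories and cardinalities agree --- lifts to an isomorphism of the Chevalley groups. The technical obstacle here lies in handling the twisted types and the small-rank exceptional cases uniformly, which calls for a case-by-case verification of the bi-interpretations.
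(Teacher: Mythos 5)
Your proposal matches the paper's treatment: the paper does not reprove this statement but simply assembles it from the literature, citing Wilson \cite{Wilson1995} for elementary equivalence in the ``only if'' direction, Ryten \cite[Chapter 5]{Ryten2007} to upgrade to isomorphism, and Point \cite{Point1999} for the ``if'' direction, exactly as you outline. Your additional sketches of what goes into Wilson's and Ryten's proofs are reasonable glosses but are not needed for (and go beyond) what the paper itself does.
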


A group is called \emph{definably simple} if it contains no proper non-trivial \emph{definable} normal subgroups. In \cite{Ugurlu2013}, U\u{g}urlu proved that a definably simple non-abelian pseudofinite group of finite centraliser dimension is simple. In her proof, she in particular uses the following important result, which will be useful for us too.

\begin{theorem}[Ellers and Gordeev {\cite{Ellers-Gordeev1998}}]\label{fact:Ellers}Let $H$ be a finite simple group of (twisted) Lie type over a field with more than $8$ elements. Then there exists a conjugacy class $C$ such that $H=CC$. \end{theorem}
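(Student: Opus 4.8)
The plan is to follow the strategy of Ellers and Gordeev, which stays entirely inside the Chevalley group structure. Fix a Borel subgroup $B = TU$ with unipotent radical $U = U^+$, an opposite Borel $B^- = TU^-$ sharing the maximal torus $T$, and (in the twisted cases) take everything to be the fixed points of the relevant Frobenius endomorphism, so that $T$ normalises $U^\pm$ and $G = \bigsqcup_{w\in W} U^-\dot w T U^+$ is the Bruhat decomposition. Choose a regular semisimple $s \in T$, so $C_G(s) = T$ and $\alpha(s) \neq 1$ for every root $\alpha$; a mild extra genericity on the values $\alpha(s)$, together with ``realness'' $s \sim s^{-1}$ (needed already so that $1 \in s^G\cdot s^G$), will be imposed as the argument dictates, and it is exactly here that the hypothesis $q > 8$ is used: such an $s$ can be found (possibly in a suitably twisted torus) once $q > 8$. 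Set $C = s^G$; we must show $C\cdot C = G$.

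First I would record the elementary reduction. For $g \in G$ one has $g \in C\cdot C$ iff there is $c_1 \in C$ with $c_1^{-1}g \in C$; writing $c_1 = xsx^{-1}$ and conjugating by $x^{-1}$, this says that some conjugate $g'$ of $g$ satisfies $s^{-1}g' \in C$, i.e.\ $g' \in s\cdot C = \{\,s\cdot xsx^{-1} : x\in G\,\}$. Since we may replace $g$ by any conjugate, the statement $C\cdot C = G$ is equivalent to: \emph{the set $s\cdot C$ meets every conjugacy class of $G$}. Using the Bruhat decomposition one then reduces this cell by cell — the open cell $U^-TU^+$ being the principal case and the cells $U^-\dot w TU^+$ handled by a parallel ``Gauss-type'' decomposition $g = u^-\dot w h u^+$ — so it suffices to show, for each $w$ and each element of the corresponding cell, that after an allowed conjugation its product with $s^{-1}$ is again $G$-conjugate to $s$.

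The core is to make this explicit in root coordinates — this is the ``Gauss decomposition with prescribed semisimple part'' of Ellers–Gordeev. Writing the two sought conjugates of $s$ as $v^-s(v^-)^{-1}$ and $v^+s(v^+)^{-1}$ with $v^\pm$ ranging over $U^\pm$ (inserting the Weyl representative in the non-open cells) and multiplying out via the Chevalley commutator relations, membership in $s\cdot C$ becomes a system of polynomial equations over $\mathbb{F}_q$ for the root-group coordinates of the $v^\pm$. Regularity of $s$ is what makes the system tractable: on each root subgroup $X_\alpha$ the map $x \mapsto (s^{-1}xs)x^{-1}$ is multiplication by $\alpha(s)-1\neq 0$, hence bijective, so processing the roots in a suitable order up the lower central filtration of $U^\pm$ one obtains a ``triangular'' system with invertible leading part. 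A Lang–Weil estimate — or, as in the original, an explicit inductive solution root by root — then produces an $\mathbb{F}_q$-solution once $q$ exceeds an absolute bound.

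The main obstacle is everything hidden in ``once $q$ exceeds an absolute bound''. One must run the root-coordinate computation uniformly over all Bruhat cells, over all Lie types (the classical families, the exceptional groups $G_2,F_4,E_6,E_7,E_8$, and the twisted forms ${}^2\!A_n,{}^2\!D_n,{}^3\!D_4,{}^2\!B_2,{}^2\!G_2,{}^2\!F_4,{}^2\!E_6$), and then push the bound down to $q = 9$ by a finer, type-by-type analysis for the small ranks, with direct verification handling the small fields $q \le 8$ excluded from the statement. Controlling the combinatorics of the commutator relations in the exceptional and twisted cases, and simultaneously guaranteeing the existence of a regular semisimple $s$ with the required genericity and realness whenever $q > 8$, is where the real work lies; the skeleton above is robust, but the case analysis is what makes \cite{Ellers-Gordeev1998} a substantial paper in its own right.
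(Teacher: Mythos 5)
This statement is not proved in the paper at all: it is imported verbatim as a background theorem with a citation to Ellers and Gordeev, so there is no internal proof to compare your attempt against. Your outline does describe, accurately, the strategy of the original Ellers--Gordeev paper (reduce $H=CC$ to showing $s\cdot s^{H}$ meets every conjugacy class, work cell by cell in the Bruhat decomposition via a Gauss decomposition with prescribed semisimple part, and use regularity of $s$ to make the map $x\mapsto (s^{-1}xs)x^{-1}$ bijective on each root subgroup). But as a proof it has a genuine gap: everything that actually constitutes the theorem is announced rather than established. You do not exhibit, for each untwisted and twisted type and each $q>8$, a real regular semisimple element with the genericity you invoke (this is already nontrivial in the Suzuki and Ree types, where the relevant tori are small and of special shape); you do not verify that the ``triangular'' system of root-coordinate equations is solvable in every Bruhat cell and for every twisted form; and the appeal to Lang--Weil cannot rescue this, since such estimates only give solvability for $q$ larger than some unspecified constant depending on the type, whereas the whole content of the statement is the explicit uniform bound $q>8$ --- which in the original is obtained precisely by solving the system root by root and then doing a delicate small-rank, small-field analysis. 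So what you have is a correct plan for reproving a substantial external theorem, not a proof of it; for the purposes of this note the right course is what the paper does, namely to cite \cite{Ellers-Gordeev1998} as a black box.
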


\subsubsection{Semi-simple pseudofinite groups}

Let $H$ be a \emph{finite} group and recall that we denote by ${\rm Rad}(H)$ the \emph{solvable radical} (i.e. the maximal solvable normal subgroup) of $H$. In \cite{Wilson2009}, Wilson proved that there exists a parameter-free first-order formula $\phi_{R}(x)$ so that $\phi_{R}(H)= \{x : \phi(x)\}={\rm Rad}(H)$. 

Let $G\equiv \prod_{i\in I} G_i /U$ be pseudofinite and define $R(G)=\{g \in G : \phi_{R}(g)\}$. The group $R(G)\equiv \prod_{i\in I} {\rm Rad}(G_i) /U$ is a definable normal subgroup of $G$ which contains all definable solvable normal subgroups of $G$. In general, the subgroup $R(G)$ needs not to be solvable as there may not be a common bound on the derived lengths of the finite solvable groups ${\rm Rad}(G_i)$. The subgroup $R(G)$ is trivial if and only if $G$ is \emph{semi-simple}, that is, has no non-trivial abelian normal subgroup (see \cite[Lemma 2.17]{Point-Houcine2013}). So the notion of $R(G)$ allows one to split any pseudofinite group $G$ to a semi-simple part $G/R(G)$ and to a `solvable-like' part $R(G)$. Note that, unlike simplicity, semi-simplicity is a first-order expressible property by the first-order sentence $(\forall x\neq 1)(\exists y)([x, x^y]\neq 1)$.

\subsubsection{Finitely generated pseudofinite groups} The following natural question was raised by Sabbagh.

\begin{question}[Sabbagh]\label{question:sabbagh}Are there any finitely generated pseudofinite groups?
\end{question}

There are several non-existence results related to Question~\ref{question:sabbagh}, see \cite[Proposition 3.14]{Point-Houcine2013}. In particular, the following in well-known.

\begin{fact}[{See e.g. \cite[Proposition 3.14]{Point-Houcine2013}}]\label{fact:Point-and-Houcine-fiitely-gen} Neither solvable nor simple finitely generated pseudofinite groups exist.
\end{fact}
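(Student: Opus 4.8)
The plan is to prove both statements by contradiction, in each case reducing to a configuration that is quickly ruled out. Throughout I use two facts recorded above: a pseudofinite group may be assumed to be of the form $\prod_{n\in\N} G_n/U$ for finite groups $G_n$ and a non-principal ultrafilter $U$ on $\N$ (so that the \L o\'{s} theorem is available over a countable index set), and a definable subgroup or a definable quotient of a pseudofinite group is finite or pseudofinite.

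\emph{The simple case.} Suppose $G$ is finitely generated, simple and pseudofinite. By Theorem~\ref{fact:Wilson-and-Ryten} we may write $G\cong X(\F)$ for a (twisted) Chevalley group $X$ over a pseudofinite, hence infinite, field $\F$. Realising $X(\F)$ by matrices over $\F$, a finite generating set of $G$ involves only finitely many entries from $\F$; let $\F_0\le\F$ be the (finitely generated) subfield they generate. Then $G=\langle\text{generators}\rangle\le X(\F_0)\le X(\F)=G$, so $X(\F_0)=X(\F)$; since the unipotent root elements of $X(\F)$ already realise every element of $\F$ as a parameter, this forces $\F_0=\F$. Thus $\F$ is a finitely generated field. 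But an infinite finitely generated field is not pseudo-algebraically closed, whereas every pseudofinite field is --- a contradiction.

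\emph{The solvable case.} Suppose $G$ is finitely generated, solvable and pseudofinite; I induct on the derived length $\ell$ of $G$. If $\ell=1$ then $G$ is a finitely generated infinite abelian group, so $G\cong\Z^r\times F$ with $F$ finite and $r\ge1$. Pick a prime $p\nmid|F|$. On $G$ the map $x\mapsto x^p$ is injective (injective on $\Z^r$, bijective on $F$) but not surjective (a standard generator of the $\Z^r$-factor is not a $p$-th power in $G$). This contradicts the sentence ``$x\mapsto x^p$ injective $\Rightarrow$ $x\mapsto x^p$ surjective'', which is true in every finite group by pigeonhole and hence in $G$. For $\ell\ge2$, let $A:=G^{(\ell-1)}$ be the last nontrivial term of the derived series, an abelian characteristic subgroup; the crucial point is to see that $A$ (or a suitable abelian normal subgroup of finite index in $A$) is \emph{definable} in $G$ --- this is where one must invoke the structure theory of finitely generated solvable groups, essentially to control the relevant verbal widths, possibly after first passing to a finite-index subgroup of $G$. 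Granting this, $G/A$ is a finitely generated solvable group of derived length $<\ell$; being a definable quotient of $G$, it is finite or pseudofinite, and the latter is excluded by the induction hypothesis, so $G/A$ is finite. Hence $A$ has finite index in $G$, so $A$ is finitely generated; it is infinite (being of finite index in the infinite group $G$) and, being definable, pseudofinite, and it is abelian --- contradicting the base case $\ell=1$.

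\emph{Where the difficulty lies.} The simple case is essentially a formality once one grants the standard input that no finitely generated field is pseudofinite. The work in the solvable case is not the model theory but the passage between $G$, $G/A$ and $A$: ``finitely generated'' is not first-order and is not inherited by arbitrary subgroups or quotients, so pseudofiniteness can only be transported along \emph{definable} sets. Producing the definable infinite abelian normal section in the inductive step --- equivalently, handling verbal widths along the derived series of a finitely generated solvable group --- is the step I expect to be the real obstacle; it is exactly this bookkeeping that is carried out in the argument referenced in \cite{Point-Houcine2013}.
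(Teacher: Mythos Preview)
The paper does not actually prove this statement: it is recorded as a Fact with a pointer to \cite[Proposition~3.14]{Point-Houcine2013}, and no argument is given. So there is no in-paper proof to compare against; let me instead comment on your proposal on its own merits.

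Your treatment of the simple case is sound. The reduction via Theorem~\ref{fact:Wilson-and-Ryten} to $G\cong X(\F)$, the observation that a finite generating set already lies in $X(\F_0)$ for a finitely generated subfield $\F_0\le\F$, and the conclusion $\F_0=\F$ via the root subgroups are all correct (with the usual minor caveats about twisted types and the chosen matrix realisation). The final step --- that an infinite finitely generated field is never pseudofinite --- is true; in characteristic $p>0$ the cleanest reason is ``not perfect'' rather than ``not PAC'', while in characteristic $0$ one does use failure of PAC.

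The solvable case, however, has a genuine gap, and to your credit you flag it yourself. Your induction needs $A=G^{(\ell-1)}$ (or some abelian normal replacement) to be \emph{definable}, so that pseudofiniteness passes to the quotient $G/A$. But finitely generated solvable groups can have infinite commutator width --- already the free solvable groups $F_n/F_n^{(d)}$ with $d\ge 2$ do --- so the hoped-for control of verbal widths is simply unavailable in this generality, and passing to a finite-index subgroup does not repair it. Nor can pseudofiniteness itself supply the bound: ``commutator width $\le W$'' is first-order, and it holds in $G$ only if it holds in almost all of the finite $G_i$, which you have no way to enforce. So ``granting this'' is not bookkeeping; it is precisely the missing idea, and the inductive step as written does not go through. (Your abelian base case, by contrast, is perfectly fine: the sentence ``no $p$-torsion $\Rightarrow$ every element is a $p$-th power'' holds in every finite group and fails in $\Z^r\times F$ for $r\ge 1$ and $p\nmid|F|$.)
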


The following fact will be useful to us later on.

\begin{fact}[Palac\'{i}n {\cite[Lemma 2.2]{Palacin2018}}]\label{fact:Palacin} Let $G$ be a finitely generated pseudofinite group and assume that it contains an infinite definable definably simple non-abelian normal subgroup $N$. Then the group $N$ is a finitely generated pseudofinite group.

\end{fact}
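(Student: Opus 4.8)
The plan is to prove that $N$ has \emph{finite index} in $G$. Granting that, $N$ is a finite-index subgroup of the finitely generated group $G$, hence finitely generated by the Reidemeister--Schreier rewriting process, and it is pseudofinite as an infinite definable subgroup of a pseudofinite group (cf.\ \cite[Lemma 2.16]{Point-Houcine2013}); this yields the conclusion. So everything reduces to showing $[G:N]<\infty$, and the idea is to identify $G/N$ as a finitely generated pseudofinite solvable group and then invoke Fact~\ref{fact:Point-and-Houcine-fiitely-gen}.

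First I would record two elementary consequences of the hypotheses on $N$. If $1\neq A\unlhd N$ is abelian, then the definable normal subgroup $C_N(A)$ contains $A$, so $C_N(A)=N$ and $A\le Z(N)$; since $Z(N)$ is then a nontrivial definable normal subgroup, $Z(N)=N$, contradicting that $N$ is non-abelian. Hence $Z(N)=1$ and $N$ has no nontrivial abelian normal subgroup, i.e.\ $N$ is semi-simple. Next, replacing $G$ by $G/C_G(N)$ and $N$ by its image (which is isomorphic to $N$ since $Z(N)=1$, and is again definable, normal, and definably simple), I may assume $C_G(N)=1$; the new $G$ is still finitely generated, and is pseudofinite rather than finite because it contains a copy of the infinite group $N$. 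Now $G$ acts faithfully on $N$ by conjugation and $N=\operatorname{Inn}(N)\unlhd G\le\operatorname{Aut}(N)$, so $G/N$ embeds into $\operatorname{Out}(N)$.

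The main step is to show $G/N$ is solvable of bounded derived length. Write $G\equiv\prod_{i\in I}G_i/U$ with finite $G_i$, adding constants for the parameters defining $N$ so that $N$ corresponds to normal subgroups $N_i\unlhd G_i$ and $N\equiv\prod_{i\in I}N_i/U$. The statements ``$N$ is non-abelian'', ``$N$ is semi-simple'' (equivalently the sentence $(\forall x\neq1)(\exists y)\,[x,x^y]\neq1$) and ``$C_G(N)=1$'' are first-order, hence hold in $(G_i,N_i)$ for almost all $i$. Using that $N$ is definably simple one moreover gets that almost all $N_i$ are \emph{simple}: a finite semi-simple group that is not simple has a proper nontrivial normal subgroup that is uniformly definable --- its socle, uniformly definable by the methods of \cite{Wilson2009}, or, when the socle is the whole group, a double centraliser $C_{N_i}(C_{N_i}(\langle c_1,c_2\rangle))$ where $c_1,c_2$ generate a simple direct factor --- and this would produce a proper nontrivial definable normal subgroup of $N$. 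For almost all $i$ we then have $G_i/N_i\hookrightarrow\operatorname{Out}(N_i)$, which by the Schreier conjecture (a consequence of the classification of finite simple groups) is solvable of derived length at most an absolute constant $d_0$. Since ``$G/N$ satisfies the law $\delta_{d_0}(\bar x)=1$'' --- i.e.\ every value of the $d_0$-th derived word on elements of $G$ lies in $N$ --- is a first-order property of $(G,N)$, it transfers from the $(G_i,N_i)$ to $(G,N)$, so $G/N$ is solvable of derived length $\le d_0$.

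Finally, $G/N$ is finitely generated (a quotient of $G$), solvable, and --- $N$ being a definable normal subgroup of the pseudofinite group $G$ --- either finite or pseudofinite (cf.\ \cite[Lemma 2.16]{Point-Houcine2013}). If it were infinite it would be a finitely generated pseudofinite solvable group, which does not exist by Fact~\ref{fact:Point-and-Houcine-fiitely-gen}; hence $G/N$ is finite, $[G:N]<\infty$, and the reduction above completes the proof. The step I expect to be the real obstacle is the passage to simple $N_i$ --- equivalently, the structural input that a definably simple non-abelian pseudofinite group is, up to elementary equivalence, an ultraproduct of finite simple groups; the remainder is a routine \L o\'{s}-style transfer together with the two quoted group-theoretic facts (Reidemeister--Schreier and the Schreier conjecture).
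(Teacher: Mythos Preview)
The paper does not supply a proof of this statement: it is recorded as a \emph{Fact} and attributed to Palac\'{i}n \cite[Lemma~2.2]{Palacin2018}, with no argument given. So there is nothing in the paper to compare your proposal against; what follows is an assessment of your sketch on its own terms.

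Your overall architecture is the natural one and almost certainly coincides with Palac\'{i}n's: reduce to $C_G(N)=1$, show that $G/N$ is solvable of bounded derived length via the Schreier conjecture applied to the finite factors, conclude $G/N$ is finite by Fact~\ref{fact:Point-and-Houcine-fiitely-gen}, and then invoke Reidemeister--Schreier. You also correctly isolate the genuine crux, namely passing from ``$N$ definably simple'' to ``almost all $N_i$ are simple''.

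The part that is not yet justified is precisely that crux. Two specific issues: first, \cite{Wilson2009} gives uniform definability of the \emph{solvable radical}, not of the socle, so the appeal to ``the methods of \cite{Wilson2009}'' for $\mathrm{Soc}(N_i)$ is unsupported as written. Second, the double-centraliser trick $C_{N_i}(C_{N_i}(c_1,c_2))$ does produce a proper nontrivial \emph{normal} subgroup of $N_i$ when $N_i$ equals its socle (then $T_1\unlhd N_i$ and the computation goes through), but when $\mathrm{Soc}(N_i)\lneq N_i$ you have not checked normality: if $N_i$ permutes the simple factors of its socle, $C_{N_i}(T_1)$ need not be normal in $N_i$, so neither need its centraliser be; and in the almost-simple case $\mathrm{Soc}(N_i)=T_1\lneq N_i$ one has $C_{N_i}(T_1)=1$, so the double centraliser is all of $N_i$ and gives nothing. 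A clean way to repair this is to use a bounded-covering result (in the spirit of Theorem~\ref{fact:Ellers}, or the Liebeck--Shalev/Nikolov--Segal circle of results) to define a \emph{minimal normal} subgroup of $N_i$ as $(c^{N_i})^d$ for a suitable $c$ and a fixed $d$; then definable simplicity of $N$ forces this to be all of $N$, hence almost all $N_i$ are minimal normal in themselves, i.e.\ simple. You were right to flag this step as the obstacle; it just needs a sharper tool than the two you named.
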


\subsection{(Pseudo)finite groups of finite centraliser dimension}Recall that a group $G$ is of centraliser dimension $k$ if the longest proper chain of descending (equiv. ascending) centralisers has length $k$ and that this is denoted by ${\rm cd}(G)=k$.

Clearly, one may express in a first-order way that a group is of centraliser dimension (at most) $k$, for some \emph{fixed} $k$. So, by \L os's theorem, a pseudofinite group $G\equiv \prod_{i\in I} G_i/U$ is of centraliser dimension (at most) $k$ if and only if the finite group $G_i$ is of centraliser dimension (at most) $k$, for almost all $i$.

The following result by Khukhro immediately implies that the subgroup $R(G)$ of a pseudofinite group $G$ with ${\rm cd}(G)=k$ is solvable of $k$-bounded derived length. 

\begin{theorem}[Khukhro {\cite[Theorem 2]{Khukhro2009}}]\label{fact:khukhro} Let $G$ be a group elementarily equivalent to an ultraproduct of finite solvable groups. If $G$ is of ${\rm cd}(G)=k$ then $G$ is solvable of $k$-bounded derived length.
\end{theorem}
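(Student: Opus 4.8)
The plan is to reduce the statement to a purely finite-group-theoretic fact and then transfer it back along elementary equivalence. The fact to establish is: \emph{there is a function $f\colon \N \to \N$ such that every finite solvable group $H$ with ${\rm cd}(H) \le k$ satisfies $H^{(f(k))} = 1$.} Granting this, the theorem follows at once. Write $G \equiv \prod_{i \in I} H_i/U$ with each $H_i$ finite solvable; for a fixed $k$ both ``${\rm cd}\le k$'' and ``$H^{(f(k))}=1$'' are single first-order sentences, so from ${\rm cd}(G)=k$ and \L o\'{s}'s theorem one gets ${\rm cd}(H_i)\le k$, hence $H_i^{(f(k))}=1$ by the fact, for almost all $i$; applying \L o\'{s}'s theorem again and then elementary equivalence yields $G^{(f(k))}=1$. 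Thus $G$ is solvable of derived length at most $f(k)$, a bound depending only on $k$.

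So the whole content lies in the finite statement. The one structural tool available is that ${\rm cd}$ is monotone under passing to \emph{subgroups} (but not to quotients): a strict descending chain of centralisers in a subgroup $K\le H$ may be rewritten as $C_K(X_0)\supsetneq C_K(X_1)\supsetneq\cdots$ with $X_0\subseteq X_1\subseteq\cdots$, and since $C_K(X_j)=C_H(X_j)\cap K$ the induced chain $C_H(X_0)\supsetneq C_H(X_1)\supsetneq\cdots$ is strict of the same length, so ${\rm cd}(K)\le{\rm cd}(H)$. Using this, I would bound the derived length of a finite solvable $H$ with ${\rm cd}(H)\le k$ by controlling two invariants: (a) the nilpotency class of every nilpotent subgroup of $H$ --- the relevant lemma being that a group containing elements $u_1,\dots,u_c$ all of whose iterated commutators $[u_1,\dots,u_{j+1}]$ ($1\le j\le c-1$) are non-trivial has centraliser dimension at least linear in $c$ --- and (b) the Fitting (nilpotent) length $n$ of $H$. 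Suppose $n\le n(k)$ and, for the Fitting series $1=F_0\unlhd\cdots\unlhd F_n=H$, every ${\rm class}(F_j/F_{j-1})\le c(k)$ (the latter obtained by applying (a) to the subgroups $F_j$ and their Sylow subgroups, since ${\rm cd}$ is \emph{not} inherited by the quotients $H/F_{j-1}$). Since the derived length of an extension is at most the sum of the derived lengths, ${\rm dl}(H)\le\sum_{j=1}^n{\rm dl}(F_j/F_{j-1})\le n(k)\,c(k)=:f(k)$.

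The main obstacle is (b): converting a \emph{deep Fitting series} into a \emph{long strict chain of centralisers} in $H$. The naive candidate $H\ge Z(F_1)\ge Z(F_2)\ge\cdots\ge Z(F_n)$ (using $C_H(F(H))\le F(H)$, which holds for solvable $H$) need not be strict: e.g.\ for $H=E\rtimes C_q$ with $C_q$ acting faithfully on the extraspecial group $E=F(H)$ while fixing $Z(E)$ pointwise one has $Z(F_1)=Z(E)=Z(H)=Z(F_2)$, yet $H$ has Fitting length $2$. I would instead invoke the structure theory of solvable groups: a solvable group of Fitting length $n$ contains a subgroup of Fitting length $n$ of tight iterated-semidirect-product shape $A_n\ltimes(A_{n-1}\ltimes(\cdots\ltimes A_1))$, each layer acting faithfully on the product of the lower ones (obtained by repeatedly passing to a chief factor inside $F(\,\cdot\,)/\Phi(F(\,\cdot\,))$, on which the Fitting quotient acts faithfully, and lifting via coprimality). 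In such a subgroup one builds, by interleaving the centralisers $C(A_1A_2\cdots A_j)$ with centralisers of suitably chosen elements lying outside the Fitting subgroup, a strict chain whose length grows with $n$; subgroup-monotonicity of ${\rm cd}$ then bounds $n$ by a function of $k$. Verifying that this interleaved chain is genuinely strict --- and extracting an explicit $n(k)$ --- is where the real work lies; step (a) is of the same flavour but easier, and the model-theoretic transfer is routine.
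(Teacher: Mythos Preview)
The paper does not give its own proof of this statement: Theorem~\ref{fact:khukhro} is quoted verbatim from Khukhro's paper and used as a black box (no proof follows the statement in Section~\ref{Sec:pseudofinite groups}). So there is nothing in the paper to compare your argument against.

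On the proposal itself: the model-theoretic reduction in your first paragraph is correct and routine. For each fixed $k$ and each fixed $m$, both ``${\rm cd}\le k$'' and ``$G^{(m)}=1$'' are single first-order sentences, so \L o\'{s}'s theorem together with elementary equivalence transfers a uniform bound $f(k)$ for finite solvable groups to $G$. The content, as you rightly say, is the purely finite statement.

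Your outline of the finite statement is a plausible strategy but, by your own admission, not a proof. Step~(a) is the easier half: the relevant fact that a nilpotent group of class $c$ has centraliser dimension growing with $c$ is known, and your reduction via Sylow subgroups of $F_j$ to bound ${\rm class}(F_j/F_{j-1})$ is sound, since class does not increase under quotients of $p$-groups and ${\rm cd}$ is inherited by subgroups. Step~(b) --- bounding the Fitting length in terms of $k$ --- is where the genuine difficulty lies, and you stop exactly there. The existence of a tower subgroup $A_n\ltimes(A_{n-1}\ltimes(\cdots\ltimes A_1))$ of full Fitting length is a correct ingredient to invoke, but manufacturing a \emph{strict} centraliser chain inside such a tower whose length grows with $n$ is precisely the substantive lemma, and your sketch does not supply it (your own counterexample with $E\rtimes C_q$ already shows why the na\"{i}ve candidates fail). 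So what you have is a correct reduction and a reasonable plan of attack, with the key finite-group lemma left open; that lemma is the actual content of Khukhro's theorem.
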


Another important result for us is the following.

\begin{fact}[Buturlakin {\cite[Proposition 2.1]{Buturlakin2019}}, see also \cite{Borovik-Karhumaki2019}]\label{fact:buturlakin-socle} Let $H$ be a finite group with ${\rm cd}(H)=k$. Then the centraliser dimension of $H/{\rm Rad}(H)$ is $k$-bounded. 
\end{fact}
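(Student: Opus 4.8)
The plan is to analyse $\bar H := H/\mathrm{Rad}(H)$, which has trivial soluble radical, and to show --- using the structure theory of such groups together with the classification of finite simple groups --- that every invariant of $\bar H$ is bounded by a function of $k := \mathrm{cd}(H)$. Recall that a finite group $\bar H$ with $\mathrm{Rad}(\bar H)=1$ has socle $S=\mathrm{Soc}(\bar H)=S_1\times\cdots\times S_n$ with each $S_j$ a nonabelian finite simple group (an abelian minimal normal subgroup would lie in the radical), that $C_{\bar H}(S)=1$, so that conjugation embeds $\bar H\hookrightarrow\mathrm{Aut}(S)$, and that $\mathrm{Aut}(S)$ has a normal subgroup $\prod_{j=1}^n\mathrm{Aut}(S_j)$ of index at most $n!$ (the factors being permuted within isomorphism types). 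Thus it suffices to bound, in terms of $k$, the number $n$ of factors and each $\mathrm{cd}(\mathrm{Aut}(S_j))$; the passage to a bound on $\mathrm{cd}(\bar H)$ is then routine from two elementary facts about centraliser dimension --- that it is monotone under passing to subgroups, and that $\mathrm{cd}(G)$ is bounded in terms of $\mathrm{cd}(N)$ and $|G:N|$ when $N\trianglelefteq G$ (interleave a centraliser chain of $G$ with the chains it induces in $N$ and in the subgroup lattice of $G/N$).

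To bound $n$, I would use that $\mathrm{Soc}(\bar H)$ lifts into $H$: since $\mathrm{Rad}(H)$ is soluble and $\bar H$ has socle a product of $n$ nonabelian simple groups, the layer $E(H)$ (the product of the components of $H$) is a central product $Q_1*\cdots*Q_n$ of $n$ quasisimple subgroups of $H$ with $Q_j/Z(Q_j)\cong S_j$. In any central product $E=Q_1*\cdots*Q_n$ of quasisimple groups there is a strictly descending chain of centralisers $E\supsetneq C_E(y_1)\supsetneq\cdots\supsetneq C_E(y_n)$: picking $q_j\in Q_j\setminus Z(Q_j)$ and putting $y_j:=q_1\cdots q_j$ one computes $C_E(y_1,\dots,y_j)=C_{Q_1}(q_1)\cdots C_{Q_j}(q_j)\cdot Q_{j+1}\cdots Q_n$, which strictly decreases as $j$ grows because $q_{j+1}$ is noncentral in $Q_{j+1}$. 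Since $E(H)\le H$ and $\mathrm{cd}$ is monotone under subgroups, $n\le\mathrm{cd}(E(H))\le\mathrm{cd}(H)=k$.

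Next I would bound $\mathrm{cd}(\mathrm{Aut}(S_j))$. From $Q_j\le E(H)\le H$ we have $\mathrm{cd}(Q_j)\le k$. A quasisimple cover of $\mathrm{Alt}_m$ still contains $\lfloor m/3\rfloor$ pairwise commuting, independent $3$-cycles modulo its (bounded) centre, so its centraliser dimension grows with $m$; similarly a quasisimple group of Lie type of rank $r$ has centraliser dimension growing with $r$. Hence $\mathrm{cd}(Q_j)\le k$ together with the classification of finite simple groups forces $S_j$ to be sporadic, alternating of bounded degree, or of Lie type of bounded Lie rank (the field being arbitrary). For each such $S_j$ the Schur multiplier and $\mathrm{Out}(S_j)$ are $k$-bounded, and --- the key input --- $\mathrm{cd}(\mathrm{Aut}(S_j))$ is $k$-bounded too: this is immediate for sporadic and alternating $S_j$, while for $S_j$ of Lie type one invokes the known fact that $\mathrm{cd}(\mathrm{Aut}(S_j))$ is bounded purely in terms of the Lie rank, the centralisers of inner-diagonal, field and graph automorphisms all having structure controlled independently of the field size. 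Feeding the bounds on $n$ and on $\mathrm{cd}(\mathrm{Aut}(S_j))$ into the first paragraph gives $\mathrm{cd}(H/\mathrm{Rad}(H))=\mathrm{cd}(\bar H)\le f(k)$ for an explicit $f$.

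I expect the main obstacle to be exactly the phenomenon that makes the statement worth proving: centraliser dimension is not monotone under quotients, so $\mathrm{cd}(H/\mathrm{Rad}(H))$ cannot be bounded by $\mathrm{cd}(H)$ directly. The argument must therefore be arranged so that whenever the simple sections $S_j$ --- or their number --- have to be detected inside $H$, this is done through an honest subgroup of $H$ (here the layer $E(H)$ and its components) rather than through the quotient. A secondary delicate point is the reliance in the last step on classification-based facts, in particular that the centraliser dimension of a group of Lie type and of its automorphism group is bounded in terms of the Lie rank alone.
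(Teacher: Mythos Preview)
The paper does not actually prove this statement: it is recorded as a Fact with a citation to Buturlakin \cite[Proposition~2.1]{Buturlakin2019} (and to \cite{Borovik-Karhumaki2019}), and no argument is supplied in the paper itself. There is therefore no in-paper proof to compare your proposal against.

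That said, your outline is very much in the spirit of the cited references: bound the number $n$ of simple factors of $\mathrm{Soc}(\bar H)$ by lifting to the layer $E(H)\le H$ and exhibiting a strict centraliser chain of length $n$ there; use the classification of finite simple groups together with $\mathrm{cd}(Q_j)\le k$ to force each $S_j$ to be sporadic, alternating of bounded degree, or of Lie type of bounded rank; invoke the (nontrivial, classification-dependent) fact that $\mathrm{cd}(\mathrm{Aut}(S_j))$ is bounded in terms of the Lie rank; and finally assemble these bounds via the embedding $\bar H\hookrightarrow\mathrm{Aut}(S)$.

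One step, however, is not as routine as your parenthetical suggests. You claim that $\mathrm{cd}(G)$ is bounded in terms of $\mathrm{cd}(N)$ and $|G:N|$ for $N\trianglelefteq G$, justified by ``interleaving a centraliser chain of $G$ with the chains it induces in $N$ and in the subgroup lattice of $G/N$''. The interleaving argument does show that each strict drop $C_G(X_i)>C_G(X_{i+1})$ forces a drop either in $C_G(X_i)N/N$ or in $C_G(X_i)\cap N=C_N(X_i)$. The first kind is bounded by $|G/N|$, but the second chain consists of subgroups $C_N(X_i)$ with $X_i\subseteq G$, not $X_i\subseteq N$; these are fixed-point sets of automorphisms of $N$ and are \emph{not} centralisers in $N$ in general, so $\mathrm{cd}(N)$ does not directly bound their number. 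In your intended application (passing from $A=\prod_j\mathrm{Aut}(S_j)$ to $\mathrm{Aut}(S)$, index $\le n!$) this can be repaired, but it requires an additional argument rather than the blanket ``elementary fact'' you invoke; the proofs in \cite{Buturlakin2019, Borovik-Karhumaki2019} handle this point with more care.
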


It follows immediately from Fact~\ref{fact:buturlakin-socle} that if $G$ is a pseudofinite group with ${\rm cd}(G)=k$, then the definable quotient $G/R(G)$ is of $k$-bounded centraliser dimension.

\begin{remark}When we say that a group $G$ is of \emph{$k$-bounded} centraliser dimension (resp. has $k$-bounded derived length) we mean that the finite centraliser dimension of $G$ depends on $k$ only (resp. $G$ is solvable of derived length depending on $k$ only).
\end{remark}

\section{Proofs of Theorems~\ref{th:pseudofinite_fcd} and \ref{th:finite-gene-fcd}}\label{Sec:Proofs}

In this section we prove Theorems~\ref{th:pseudofinite_fcd} and \ref{th:finite-gene-fcd}.

Recall that the socle ${\rm Soc}(H)$ of a group $H$ is the subgroup generated by all minimal normal non-trivial subgroups. We say that ${\rm Soc}(H)=1$, if $H$ has no minimal normal non-trivial subgroups. If $H\neq 1$ is a finite group, then its socle ${\rm Soc}(H) \neq 1$ is a finite direct product of simple finite groups. 

The following easy fact is rather useful; we write down its proof for the readers convenience.

\begin{fact}\label{lemma:socle}Let $G\equiv \prod_{i\in I} G_i/U$ be a pseudofinite group. Assume that there is $d\in \mathbb{N}$ so that $| {\rm Soc}(G_i/{\rm Rad}(G_i))|\leqslant d$ for almost all $i$. Then $G/R(G)$ is finite.
\end{fact}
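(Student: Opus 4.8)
The plan is to exploit the uniform bound on the socles to produce a uniform bound on the whole quotient $G_i/{\rm Rad}(G_i)$, and then conclude by \L o\'s's theorem. The key structural input is the classical fact that a finite group $H$ with trivial solvable radical embeds into ${\rm Aut}({\rm Soc}(H))$ via the conjugation action: since ${\rm Rad}(H)=1$, the centraliser $C_H({\rm Soc}(H))$ is normal, has trivial intersection with ${\rm Soc}(H)$ (as ${\rm Soc}(H)$ contains all minimal normal subgroups and is a direct product of nonabelian simple groups when ${\rm Rad}(H)=1$, hence is centreless), and therefore would itself contain a minimal normal subgroup of $H$ not inside ${\rm Soc}(H)$ unless it is trivial; thus $C_H({\rm Soc}(H))=1$ and $H \hookrightarrow {\rm Aut}({\rm Soc}(H))$.

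First I would set $\db_i := G_i/{\rm Rad}(G_i)$, so that ${\rm Rad}(\db_i)=1$ and, by hypothesis, $|{\rm Soc}(\db_i)| \leqslant d$ for almost all $i$. Next I would invoke the embedding $\db_i \hookrightarrow {\rm Aut}({\rm Soc}(\db_i))$ from the previous paragraph. Since there are only finitely many groups of order at most $d$, there is a single number $d' = d'(d)$ bounding $|{\rm Aut}(S)|$ over all groups $S$ of order $\leqslant d$; hence $|\db_i| \leqslant d'$ for almost all $i$. Then I would apply \L o\'s's theorem to the first-order sentence asserting ``there are at most $d'$ elements'': since $\prod_{i \in I} \db_i / U \cong (\prod_{i\in I} G_i/U)\big/(\prod_{i\in I}{\rm Rad}(G_i)/U) = G/R(G)$ and this sentence holds in $\db_i$ for almost all $i$, it holds in $G/R(G)$, so $G/R(G)$ is finite.

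I do not expect any genuine obstacle here; the statement is, as the author says, easy. The one point that deserves a sentence of care is the identification $\prod_{i\in I}\db_i/U \cong G/R(G)$, which follows from the fact that ultraproducts commute with definable quotients together with $R(G) \equiv \prod_{i\in I}{\rm Rad}(G_i)/U$ coming from Wilson's uniform definability of the solvable radical; and the bound on $|{\rm Aut}(S)|$ is immediate since $S$ of order $\leqslant d$ gives $|{\rm Aut}(S)| \leqslant (d)!$. The only mild subtlety is making sure the embedding argument is stated for $\db_i$ rather than $G_i$ (the radical of $G_i$ itself need not be trivial, which is exactly why one passes to the quotient first), but since ${\rm Rad}(G_i/{\rm Rad}(G_i)) = 1$ this is automatic.
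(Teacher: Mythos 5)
Your proposal is correct and follows essentially the same route as the paper: bound $|{\rm Aut}({\rm Soc}(\overline{G}_i))|$ uniformly, embed $\overline{G}_i=G_i/{\rm Rad}(G_i)$ into ${\rm Aut}({\rm Soc}(\overline{G}_i))$ to bound $|\overline{G}_i|$, and conclude by \L o\'s's theorem. In fact you justify the embedding more carefully than the paper does, by noting that ${\rm Rad}(\overline{G}_i)=1$ forces $C_{\overline{G}_i}({\rm Soc}(\overline{G}_i))=1$, whereas the paper only cites that the socle is characteristic.
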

\begin{proof}Denote $\overline{G}=G/R(G) $ and $\overline{G}_i=G_i/{\rm Rad}(G_i)$. 

The following holds for almost all $i$. We may assume that $\overline{G}_i \neq 1$ and hence $1 \neq |{\rm Soc}(\overline{G}_i)| \leqslant d$. This means that there is a common bound on the sizes of the automorphism groups $|{\rm Aut}({\rm Soc}(\overline{G}_i))|$. As the socle ${\rm Soc}(\overline{G}_i)$ is a characteristic subgroup of $\overline{G}_i$, the finite group $\overline{G}_i$ embeds into ${\rm Aut}({\rm Soc}(\overline{G}_i))$. So there is a common bound on the sizes $|\overline{G_i}|$ and hence $\overline{G} \equiv \prod_{i\in I} \overline{G}_i /U$ is finite.  \end{proof}

\subsection{Proof of Theorem~\ref{th:pseudofinite_fcd}} Let $G$ be a pseudofinite group of ${\rm cd}(G)=k$. Denote $\overline{G}=G/R(G)$ and $\overline{G}_i=G_i/{\rm Rad}(G_i)$.

Observe first that $(\ref{radical})$ holds by Theorem~\ref{fact:khukhro}.

As the finite groups $\overline{G}_i$ are semi-simple, the socles $\overline{S}_i={\rm Soc}(\overline{G}_i)$ are finite direct products $\overline{S}_{1_i}\times \cdots \times \overline{S}_{r_i}$ of non-abelian simple groups $\overline{S}_{j_{i}}$. We start by observing that for almost all $i$ the number $r_i$ of the components $\overline{S}_{j_{i}}$ is bounded by some $\ell$ depending on $k$ only.

\begin{claim} \label{r_i} For almost all $i$, there is $\ell \in \mathbb{N}$, depending on $k$ only, such that $r_i \leqslant \ell$.
\end{claim}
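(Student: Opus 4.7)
The plan is to bound $r_i$ via the centraliser dimension of $\overline{S}_i$, which in turn is controlled by ${\rm cd}(\overline{G}_i)$, which is itself $k$-bounded thanks to Fact~\ref{fact:buturlakin-socle}. So the whole argument is a sandwich
\[
  r_i \;\leqslant\; {\rm cd}(\overline{S}_i) \;\leqslant\; {\rm cd}(\overline{G}_i) \;\leqslant\; f(k),
\]
where $f$ is a function depending only on $k$.

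First, since having centraliser dimension at most $k$ is first-order expressible, \L o\'{s}'s theorem gives ${\rm cd}(G_i) \leqslant k$ for almost all $i$. Fact~\ref{fact:buturlakin-socle} then supplies a constant $f(k)$, depending only on $k$, such that ${\rm cd}(\overline{G}_i) \leqslant f(k)$ for almost all $i$. Next, I would observe that centraliser dimension is monotone under passage to subgroups: any strictly descending chain of centralisers $C_H(X_0) \supsetneq C_H(X_1) \supsetneq \cdots \supsetneq C_H(X_m)$ in a subgroup $H \leqslant K$ lifts to a strictly descending chain $C_K(X_0) \supsetneq C_K(X_1) \supsetneq \cdots \supsetneq C_K(X_m)$ in $K$, because $C_K(X_j) \cap H = C_H(X_j)$ ensures that equality $C_K(X_j) = C_K(X_{j+1})$ would force $C_H(X_j) = C_H(X_{j+1})$. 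Applying this to $\overline{S}_i \leqslant \overline{G}_i$ yields ${\rm cd}(\overline{S}_i) \leqslant f(k)$ for almost all $i$.

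The remaining task is to exhibit an explicit chain of centralisers of length $r_i$ inside $\overline{S}_i = \overline{S}_{1_i} \times \cdots \times \overline{S}_{r_i}$. For each $j$, pick a non-trivial element $a_j \in \overline{S}_{j_i}$, viewed as an element of $\overline{S}_i$ with $1$'s in the other coordinates. Since each $\overline{S}_{j_i}$ is non-abelian simple, its centre is trivial, and so $C_{\overline{S}_{j_i}}(a_j) \subsetneq \overline{S}_{j_i}$. For $0 \leqslant m \leqslant r_i$ one has
\[
  C_{\overline{S}_i}(\{a_1, \ldots, a_m\}) \;=\; C_{\overline{S}_{1_i}}(a_1) \times \cdots \times C_{\overline{S}_{m_i}}(a_m) \times \overline{S}_{(m+1)_i} \times \cdots \times \overline{S}_{r_i},
\]
and the inclusion between consecutive terms is strict because a full factor is replaced by a proper subgroup at each step. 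This is a strictly descending chain of centralisers in $\overline{S}_i$ of length $r_i$, giving $r_i \leqslant {\rm cd}(\overline{S}_i) \leqslant f(k)$, so one may take $\ell = f(k)$.

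There is no real obstacle here: the substantive input is already Fact~\ref{fact:buturlakin-socle}, and the rest is an elementary chain construction exploiting the direct-product structure of the socle together with the triviality of the centres of its factors.
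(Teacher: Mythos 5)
Your proposal is correct and follows essentially the same route as the paper: both bound $r_i$ by exhibiting a centraliser chain built from non-trivial elements of the distinct simple factors of $\overline{S}_i$ and then appeal to Fact~\ref{fact:buturlakin-socle} to see that ${\rm cd}(\overline{G}_i)$ is $k$-bounded. The only difference is presentational—the paper argues by contradiction and is terser, while you argue directly and spell out the strictness of the chain (via trivial centres of the simple factors) and the monotonicity of centraliser dimension under subgroups, which the paper leaves implicit.
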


Assume contrary. Then, for almost all $i$, $r_i > \ell$ for all $\ell\in \mathbb{N}$. So, by setting $j=1,\ldots, \ell$, one may choose an element $x_{j_i} \in \overline{S}_{j_i} \setminus \{1\}$ so that \[C_{\overline{S}_i}(x_{1_i})  > C_{\overline{S}_i}(x_{1_i},x_{2_i}) > \ldots > C_{\overline{S}_i}(x_{1_i},x_{2_i}, \ldots, x_{\ell_i})\] is a proper descending chain of centralisers in $\overline{G}_i$, of length $\ell$, for an arbitrary large $\ell$. But the finite group $G_i$ is of centraliser dimension $k$, and therefore, by Fact~\ref{fact:buturlakin-socle}, the quotient $\overline{G}_i$ is of centraliser dimension $k_i \in \mathbb{N}$ for some $k_i$ depending on $k$. This contradiction proves that there is $\ell\in \mathbb{N}$ so that $r_i \leqslant \ell$ for almost all $i$. Clearly, the bound $\ell$ depends on $k$ only. \qed

We may now assume that for almost all $i$, the socle $\overline{S}_i$ is a direct product of $\ell$ many simple non-abelian finite groups. Next we observe that $\overline{L}\equiv \prod_{i\in I} \overline{S}_i/U$ is a direct product of $\ell$ many simple non-abelian groups. 

\begin{claim} \label{product} $\overline{L}\equiv \prod_{i\in I} \overline{S}_i/ U$ is a direct product of $\ell$ many simple non-abelian groups $\overline{L}_{j}$, each of which is either finite or isomorphic to a (twisted) Chevalley group over a pseudofinite field. In particular, $\overline{L}$ is a definable normal subgroup of $\overline{G}$.
\end{claim}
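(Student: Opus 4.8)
The plan is to analyse the ultraproduct $\overline{L}\equiv\prod_{i\in I}\overline{S}_i/U$ factor by factor and then reassemble the pieces. By Claim~\ref{r_i} we may assume that for almost all $i$ the socle $\overline{S}_i$ decomposes as $\overline{S}_{1_i}\times\cdots\times\overline{S}_{\ell_i}$, a product of exactly $\ell$ non-abelian simple finite groups (allowing some factors to be trivial if necessary, or by a pigeonhole argument after passing to a subset of $U$ one arranges that the number of non-trivial factors is constant, say $\ell$). For each fixed $j\in\{1,\dots,\ell\}$ set $\overline{L}_j:=\prod_{i\in I}\overline{S}_{j_i}/U$. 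Since each $\overline{S}_{j_i}$ is a non-abelian finite simple group, \L o\'s's theorem gives that $\overline{L}_j$ is either finite (if the orders $|\overline{S}_{j_i}|$ are bounded on a set in $U$) or an infinite pseudofinite group. In the infinite case the finite simple groups $\overline{S}_{j_i}$ cannot be alternating groups of unbounded degree — that would violate the centraliser dimension bound on $\overline{G}_i$ coming from Fact~\ref{fact:buturlakin-socle}, since $\mathrm{Alt}(n)$ has centraliser dimension tending to infinity with $n$ — so they must be of (twisted) Lie type of bounded rank over finite fields of growing size. Hence $\overline{L}_j$ is a non-principal ultraproduct of finite simple groups of bounded Lie rank, which is a simple pseudofinite group (its simplicity follows from the uniform definability and first-order expressibility of simplicity for groups of bounded Lie rank, e.g.\ via Fact~\ref{fact:Ellers}: for almost all $i$ one has $\overline{S}_{j_i}=C_iC_i$ for a conjugacy class $C_i$, and this is preserved in the ultraproduct). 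Then Theorem~\ref{fact:Wilson-and-Ryten} identifies $\overline{L}_j$ with a (twisted) Chevalley group over a pseudofinite field.

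Next I would assemble the direct product decomposition. Since for almost all $i$ we have $\overline{S}_i=\overline{S}_{1_i}\times\cdots\times\overline{S}_{\ell_i}$ with the factors commuting and intersecting trivially, the sentences expressing ``there exist $\ell$ normal subgroups pairwise commuting, with trivial pairwise intersections, whose product is everything'' transfer to the ultraproduct; more concretely, the coordinatewise projections $\overline{S}_i\to\overline{S}_{j_i}$ induce an isomorphism $\overline{L}\cong\overline{L}_1\times\cdots\times\overline{L}_\ell$, because an element of $\prod_{i}\overline{S}_i/U$ is determined by its tuple of $j$-components and multiplication is coordinatewise. Each $\overline{L}_j$ is simple non-abelian (either finite or Chevalley over a pseudofinite field as above), which is exactly the content of part~(\ref{socle}) of the theorem with $\overline{L}=\overline{L}_1\times\cdots\times\overline{L}_\ell$.

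Finally, for definability of $\overline{L}$ as a normal subgroup of $\overline{G}$: the socle $\overline{S}_i={\rm Soc}(\overline{G}_i)$ is characteristic, hence normal, in $\overline{G}_i$, and — this is the point where uniformity matters — it is uniformly definable across the $\overline{G}_i$ by a single formula. This uniform definability follows because the socle of a finite semisimple group is generated by its components, the number of components is uniformly bounded by $\ell$ (Claim~\ref{r_i}), and each component is a finite simple group of bounded Lie rank or bounded order; in either case the socle is cut out by a formula depending only on $k$ (for instance, the product of all minimal normal subgroups, together with the bound $\ell$, gives a first-order description, or one invokes the known uniform definability of the socle in the relevant class of finite groups). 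Applying \L o\'s's theorem to this uniform formula yields that $\overline{L}=\prod_{i\in I}\overline{S}_i/U$ is definable in $\overline{G}$, and normality transfers from the $\overline{G}_i$ in the same way.

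The main obstacle I anticipate is establishing the \emph{uniform} definability of the socle $\overline{S}_i$ inside $\overline{G}_i$ — that is, finding one formula that works for almost all $i$ — and, closely related, justifying cleanly that each infinite factor $\overline{L}_j$ is genuinely simple (not merely definably simple or elementarily equivalent to a simple group). Both hinge on the boundedness of the number of socle factors (Claim~\ref{r_i}) and on the fact that, once alternating groups of unbounded degree are excluded by the centraliser-dimension constraint, one is dealing with finite simple groups of bounded Lie rank, for which uniform definability results and Theorem~\ref{fact:Wilson-and-Ryten} and Fact~\ref{fact:Ellers} apply. Once those two points are in place, the direct-product decomposition and the transfer of normality are routine \L o\'s-theorem arguments.
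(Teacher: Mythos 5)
Your component-by-component analysis (excluding sporadic and alternating factors via the centraliser-dimension bound, fixing the Lie type, assembling the direct product coordinatewise) follows the same route as the paper, but two justifications do not hold up. First, simplicity of an infinite factor: you invoke Theorem~\ref{fact:Ellers} (``$\overline{S}_{j_i}=C_iC_i$, preserved in the ultraproduct''), but what transfers by \L o\'s is only that \emph{some} element $x$ satisfies $\overline{L}_j=x^{\overline{L}_j}x^{\overline{L}_j}$; this says the normal closure of that one element is everything and does not rule out a proper non-trivial normal subgroup avoiding $x$, so it does not prove simplicity (which is not first-order, and Ellers--Gordeev only asserts the existence of one such class). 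The paper obtains simplicity in the opposite order: once the $\overline{S}_{j_i}$ are of one fixed Lie type and fixed Lie rank over fields of unbounded size, the ultraproduct is itself a (twisted) Chevalley group over the pseudofinite ultraproduct field, hence simple by the ``if'' direction of Theorem~\ref{fact:Wilson-and-Ryten}; no separate simplicity argument is needed. (Note also that you assert bounded Lie rank without justification; the paper makes this an explicit step, ruled out because unbounded ranks would contradict finite centraliser dimension, citing the argument of Macpherson and Tent.)

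The more serious gap is exactly the one you flag yourself: uniform definability of $\overline{S}_i$ inside $\overline{G}_i$. Your proposed fixes do not work as stated: ``the product of all minimal normal subgroups, together with the bound $\ell$'' is not a first-order description (minimality of a normal subgroup quantifies over subgroups), and no actual uniform-definability-of-the-socle result is cited or available off the shelf in this setting. The paper closes this gap with precisely the fact you misapplied above: for an infinite component, almost every $\overline{S}_{j_i}$ is simple of Lie type over a field with more than $8$ elements, so Theorem~\ref{fact:Ellers} provides $x_{j_i}$ with $\overline{S}_{j_i}=x_{j_i}^{\overline{S}_{j_i}}x_{j_i}^{\overline{S}_{j_i}}$, and since $\overline{S}_{j_i}\unlhd\overline{G}_i$ this set equals $x_{j_i}^{\overline{G}_i}x_{j_i}^{\overline{G}_i}$, which is cut out in $\overline{G}_i$ by a single formula with parameter $x_{j_i}$; \L o\'s then makes each $\overline{L}_j$, hence $\overline{L}$, definable in $\overline{G}$ (finite components are definable as finite sets of parameters), and normality transfers from $\overline{S}_i\unlhd\overline{G}_i$. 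So in the paper Ellers--Gordeev is the definability tool, not the simplicity tool; without this step (or a genuine substitute) your proof of the ``in particular, $\overline{L}$ is a definable normal subgroup of $\overline{G}$'' clause is incomplete.
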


At this point we know that, for almost all $i$, the finite group $\overline{S}_i=\overline{S}_{1_i}\times \cdots \times \overline{S}_{\ell_i}$ is a product of $\ell$ many simple non-abelian groups. Let $j \in \{1,\ldots, \ell \}$ and consider $\overline{L}_j\equiv\prod_{i\in I} \overline{S}_{j_i} /U$. If $\overline{L}_j$ is finite then, for almost all $i$, $\overline{S}_{j_i} $ is the same non-abelian finite simple group, and hence, $\overline{L}_j \cong \overline{S}_{j_i}$ is a finite non-abelian simple group. 

Assume that $\overline{L}_j$ is infinite. We may assume that just one of the three families of non-abelian finite simple groups occurs in the ultraproduct $\overline{L}_j$. This family cannot be the sporadic groups as there are only finitely many of them. One may also easily observe that this family cannot be the alternating groups; if for almost all $i$ the finite group $\overline{S}_{j_i}$ was isomorphic to some alternating group ${\rm Alt}_{n_i}$ for $n_i \geqslant 5$, then the sizes of ${\rm Alt}_{n_i}$'s were of increasing orders as $\overline{L}_j$ is infinite. However, as the centralisers of products of even number of disjoint transpositions in ${\rm Alt}_{n_i}$ form a proper centraliser chain of length at least $\lfloor \frac{n_i}{4} \rfloor$, the ultraproduct $\overline{L}_j$ of alternating groups of increasing orders would not be of finite centraliser dimension. So $\overline{S}_{j_i}$ is a simple group of Lie type over a finite field, for almost all $i$. 

As there are only finitely many different Lie types, we may assume that for almost all $i$ the groups $\overline{S}_{j_i}$ are of the same Lie type $X$. We still need to ensure that the Lie ranks $n_i$ of $\overline{S}_{j_i}$'s cannot be arbitrarily large. This follows from the structures of finite simple groups of Lie type; as explained in \cite[page 554, proof of Claim 2]{Tent-Macpherson2007}, if there is no bound on the Lie ranks $n_i$, then $\overline{L}_j$ is not of finite centraliser dimension. Therefore, for almost all $i$, the groups $\overline{S}_{j_i}$ are of the same Lie type $X$ with fixed Lie rank, say $n$, over finite fields of increasing orders. So, by Theorem~\ref{fact:Wilson-and-Ryten}, $\overline{L}_j$ is simple and isomorphic to a (twisted) Chevalley group over a pseudofinite field.  

Finally, we need to ensure that each component $\overline{L}_j\equiv \prod_{i\in I} \overline{S}_{j_i} /U$ is definable in $\overline{G}$. Again, we may assume that $\overline{L}_j$ is infinite. Then, for almost all $i$, the group $\overline{S}_{j_i}$ is a simple group of Lie type over a field of more than $8$ elements. Therefore, by Theorem~\ref{fact:Ellers}, the following holds for almost all $i$. There is $x_{j_i} \in \overline{S}_{j_i}$ such that $$\overline{S}_{j_i}=x_{j_i}^{\overline{S}_{j_i}}x_{j_i}^{\overline{S}_{j_i}}.$$ As $\overline{S}_{j_i} \unlhd \overline{G}_i$, we have $$x_{j_i}^{\overline{G}_{i}}x_{j_i}^{\overline{G}_{i}}  = \overline{S}_{j_i}.$$ This means that $\overline{S}_{j_i}$ is uniformly definable in $\overline{G}_i$, for almost all $i$. It follows that each component $\overline{L}_j$ is a definable subgroup of $\overline{G}$. We have observed that $\overline{L}$ is a definable subgroup of $\overline{G}$. Moreover, as $\overline{S}_i \unlhd \overline{G}_i$, we have $\overline{L} \unlhd \overline{G}$. \qed

At this point, setting $L$ to be the full pre-image of $\overline{L}$ in $G$, parts $(\ref{radical})$--$(\ref{socle})$ of Theorem~\ref{th:pseudofinite_fcd} are proven. We finish the proof by observing $(\ref{centraliser})$.

\begin{claim}$C_{\overline{G}}(\overline{L})=1$.
\end{claim}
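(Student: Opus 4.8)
The plan is to reduce the infinite statement $C_{\overline{G}}(\overline{L})=1$ to its finite approximations and use the fact that $\overline{L}$ is built from the socles $\overline{S}_i$, whose centralisers in $\overline{G}_i$ are trivial. Concretely, I would first recall the standard finite fact: since $\overline{G}_i$ is semisimple (has no non-trivial abelian normal subgroup), its socle $\overline{S}_i$ is a direct product of non-abelian simple groups and $C_{\overline{G}_i}(\overline{S}_i)=1$. Indeed, $C_{\overline{G}_i}(\overline{S}_i)$ is a normal subgroup of $\overline{G}_i$ meeting $\overline{S}_i$ trivially (the centre of each simple factor is trivial), so if it were non-trivial it would contain a minimal normal subgroup of $\overline{G}_i$, hence a further simple direct factor inside the socle — but that factor centralises itself, contradiction; alternatively one notes any non-trivial normal subgroup of the semisimple group $\overline{G}_i$ intersects $\mathrm{Soc}(\overline{G}_i)$ non-trivially.

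Next I would transfer this to the ultraproduct. Because $\overline{L}\equiv\prod_{i\in I}\overline{S}_i/U$ and (from Claim~\ref{product}) $\overline{L}$ is \emph{definable} in $\overline{G}$ — in fact uniformly definable, via the formula $\overline{S}_{j_i}=x_{j_i}^{\overline{G}_i}x_{j_i}^{\overline{G}_i}$ on each factor — the centraliser $C_{\overline{G}}(\overline{L})$ is a definable subgroup of $\overline{G}$. The key point is that "$C_{\overline{G}_i}(\overline{S}_i)=1$", for the \emph{uniform} definition of $\overline{S}_i$ just mentioned, is a first-order sentence true for almost all $i$; by \L o\'s's theorem it is true in $\overline{G}$, giving $C_{\overline{G}}(\overline{L})=1$. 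One small technical caveat: the uniform definability from Claim~\ref{product} was arranged only for the infinite components $\overline{L}_j$; for the finitely many components that are finite, the factor $\overline{S}_{j_i}$ is a fixed finite simple group for almost all $i$, hence trivially uniformly definable (as a fixed finite set of realisations of an appropriate formula), so $\overline{S}_i$ as a whole is uniformly definable in $\overline{G}_i$, and the argument goes through unchanged.

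Finally, having $C_{\overline{G}}(\overline{L})=1$, I would translate back to $G$: writing $\pi\colon G\to\overline{G}=G/R(G)$ for the quotient map and $L=\pi^{-1}(\overline{L})$, an element $g\in G$ with $\pi(g)\in C_{\overline{G}}(\overline{L})$ is exactly an element whose image centralises $\overline{L}$, so $C_{G/R(G)}(\overline{L})=C_{\overline{G}}(\overline{L})=1$, which is the assertion of part~(\ref{centraliser}). The main obstacle is purely bookkeeping: making sure the definition of $\overline{S}_i$ inside $\overline{G}_i$ is genuinely uniform across almost all $i$ so that \L o\'s's theorem applies to the single sentence "the centraliser of $\mathrm{Soc}$ is trivial"; once uniform definability is in hand (as it is, by the Ellers--Gordeev argument already used in Claim~\ref{product} together with the trivial finite case), the rest is immediate.
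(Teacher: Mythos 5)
Your proof is correct and rests on exactly the same finite group theory as the paper's: the socle of the semisimple quotient $\overline{G}_i$ has trivial centre, and every non-trivial normal subgroup of a finite group meets the socle, so $C_{\overline{G}_i}(\overline{S}_i)=1$. The only difference is packaging --- you prove the finite statement first and transfer it in one application of \L o\'s (using the uniform definability of $\overline{S}_i$ from Claim~\ref{product}), whereas the paper performs the same two steps directly at the level of the ultraproduct; both are fine.
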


We first observe that $C_{\overline{G}}(\overline{L}) \cap \overline{L}=1$. Assume contrary. Then, as centralisers are definable in a group of finite centraliser dimension, the centraliser $C_{\overline{L}}(\overline{L})= \overline{C}$ is a non-trivial definable abelian normal subgroup of $\overline{L}$. Thus $\overline{C} \equiv \prod_{i \in I} \overline{C}_i / U$, where $\overline{C}_i$ is an abelian normal subgroup of the socle $\overline{S}_{i}$. But $\overline{S}_{i}$ is a direct product of $\ell$ many simple non-abelian groups $\overline{L}_{j_i}$. That is, $\overline{C}_i \cap \overline{L}_{j_i}=1$ for all $j \in \{1,\ldots, \ell \}$; this contradiction proves that $\overline{C}=1$. 

Clearly $C_{\overline{G}}(\overline{L})$ is a normal definable subgroup of $\overline{G}$. Therefore, if $C_{\overline{G}}(\overline{L})  \neq 1$, then $C_{\overline{G}}(\overline{L}) \equiv \prod_{i\in I} \overline{K}_i / \mathcal{U}$, where $\overline{K}_i$ is a normal subgroup of $\overline{G}_i$ intersecting $\overline{L}_i$ trivially. But this is impossible as then $C_{\overline{G}}(\overline{L})$ would intersect $\overline{L}$ non-trivially. \qed

Theorem~\ref{th:pseudofinite_fcd} is now proven. \qed

\subsection{Proof of Theorem~\ref{th:finite-gene-fcd}} Let $G\equiv \prod_{i\in I} G_i/U$ be a finitely generated pseudofinite group of finite centraliser dimension. We retain our earlier notation, that is, $\overline{G}=G/R(G)$, $\overline{G}_i=G_i/{\rm Rad}(G_i)$, $\overline{S}_i={\rm Soc}(\overline{G}_i)$, $\overline{L}\equiv \prod_{i\in I}\overline{S}_i/U$ and $L$ is the full pre-image of $\overline{L}$ in $G$.

By Theorem~\ref{th:pseudofinite_fcd}, $G$ has normal series of definable subgroups $$1\unlhd R(G) \unlhd L \unlhd G,$$ where $R(G)$ is solvable and $\overline{L}$ is a finite direct product of $\ell$ many non-abelian simple groups $\overline{L}_j$, each of which is either finite or pseudofinite. Moreover, $C_{\overline{G}}(\overline{L}) = 1$. Therefore, the group $G$, in its action on $\overline{L}$ by conjugation, permutes the simple subgroups $\overline{L}_j$ and the kernel of this permutation action, say $G^\circ$, is a normal subgroup of finite index in $G$. Without a loss of generality, we may assume that $G^\circ = G$ and each $\overline{L}_j \unlhd \overline{G}$.

\begin{claim}$\overline{L}$ is finite.
\end{claim}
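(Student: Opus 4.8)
The plan is to derive a contradiction from the assumption that $\overline{L}$ is infinite, which means some component $\overline{L}_j$ is infinite, hence (by Claim~\ref{product}) isomorphic to an infinite simple pseudofinite group. First I would reduce to this single component: since we have arranged that each $\overline{L}_j \unlhd \overline{G}$, and $\overline{L}_j$ is definable in $\overline{G}$, its full pre-image $L_j$ in $G$ is a definable normal subgroup of $G$ with $L_j/(L_j \cap R(G)) \cong \overline{L}_j$. The aim is to produce an infinite definable definably simple non-abelian \emph{normal} subgroup of $G$ itself, so that Fact~\ref{fact:Palacin} applies and forces that subgroup to be a finitely generated pseudofinite group — contradicting Fact~\ref{fact:Point-and-Houcine-fiitely-gen}, since a finitely generated pseudofinite group cannot be simple.

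The technical heart is passing from a simple section $\overline{L}_j$ of $G$ to an actual normal subgroup of $G$ that is definably simple. Here I would use the same trick already exploited in the proof of Claim~\ref{product}: for almost all $i$, the finite simple group $\overline{S}_{j_i}$ is of Lie type over a field with more than $8$ elements, so by Theorem~\ref{fact:Ellers} there is $x_{j_i}$ with $\overline{S}_{j_i} = x_{j_i}^{\overline{S}_{j_i}} x_{j_i}^{\overline{S}_{j_i}} = x_{j_i}^{\overline{G}_i} x_{j_i}^{\overline{G}_i}$; moreover one can arrange $x_{j_i}$ to be the image of an element $g_i \in G_i$, and pulling back through $R(G)$, the set $\{g \in G : g \in (g^{*})^{G}(g^{*})^{G} \text{ lies in a fixed coset structure}\}$ — more cleanly, the subgroup $N_j := \langle\, (g^{*})^{G}(g^{*})^{G}\,\rangle R(G)$ — is a definable normal subgroup of $G$ whose image in $\overline{G}$ is exactly $\overline{L}_j$. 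Then $N_j / R(G) \cong \overline{L}_j$ is simple, so the only definable normal subgroups of $N_j$ lying strictly between $R(G)$ and $N_j$ are trivial; combined with solvability of $R(G)$ one checks that $N_j$ has a definable normal subgroup $M_j$ with $M_j \cap R(G)$ contained in the (solvable, hence definable) radical and $M_j$ mapping onto $\overline{L}_j$ — in fact the cleanest route is to observe that $N_j/R(N_j)$ is definably simple pseudofinite, and apply Fact~\ref{fact:Palacin} to a suitable finitely generated quotient. Actually the slick version: $\overline{G}$ is finitely generated pseudofinite (as a quotient of $G$ by the definable subgroup $R(G)$), it contains the infinite definable definably simple non-abelian normal subgroup $\overline{L}_j$, so by Fact~\ref{fact:Palacin} $\overline{L}_j$ is a finitely generated pseudofinite group; but $\overline{L}_j$ is simple by Claim~\ref{product}, contradicting Fact~\ref{fact:Point-and-Houcine-fiitely-gen}.

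So the proof I would write is short: suppose $\overline{L}$ is infinite; then some $\overline{L}_j$ is infinite, and by Claim~\ref{product} it is simple and isomorphic to a (twisted) Chevalley group over a pseudofinite field, in particular infinite, definable in $\overline{G}$, definably simple and — by our normalisation $G^\circ = G$ — normal in $\overline{G}$. The quotient $\overline{G} = G/R(G)$ of the finitely generated group $G$ is finitely generated, and it is pseudofinite (a quotient of a pseudofinite group by a definable normal subgroup is finite or pseudofinite, and it is infinite since it contains the infinite $\overline{L}_j$). Fact~\ref{fact:Palacin} then says $\overline{L}_j$ is itself a finitely generated pseudofinite group, while Fact~\ref{fact:Point-and-Houcine-fiitely-gen} says no finitely generated pseudofinite group is simple — a contradiction. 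Hence $\overline{L}$ is finite.

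The main obstacle I anticipate is purely bookkeeping rather than mathematical: making sure that $\overline{L}_j$ is genuinely \emph{normal} (not merely subnormal) in $\overline{G}$ and genuinely \emph{definable} there — the normality is handled by the reduction to $G^\circ$ at the cost of passing to a finite-index subgroup (which preserves finite generation and pseudofiniteness), and the definability is the Ellers–Gordeev argument already carried out in Claim~\ref{product}. Once those two points are in hand, the contradiction is immediate from the two cited facts, so there is no real computation to do.
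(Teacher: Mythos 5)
Your final ``slick version'' is precisely the paper's own argument: $\overline{G}=G/R(G)$ is a finitely generated pseudofinite group containing the infinite definable, definably simple, non-abelian normal subgroup $\overline{L}_j$, so Fact~\ref{fact:Palacin} makes $\overline{L}_j$ a finitely generated simple pseudofinite group, contradicting Fact~\ref{fact:Point-and-Houcine-fiitely-gen}. The detour in your first two paragraphs about pulling back to a normal subgroup of $G$ itself is unnecessary, but the proof you settle on is correct and matches the paper.
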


Towards a contradiction, assume that $\overline{L}$ is infinite. Then, for some $j\in \{1,\ldots, \ell \}$, the non-abelian simple group $\overline{L}_j$ is infinite and hence isomorphic to a (twisted) Chevalley group over a pseudofinite field. Therefore, the group $\overline{L}_j$ is an infinite non-abelian (definably) simple normal definable subgroup of the finitely generated group $\overline{G}$. So, by Fact~\ref{fact:Palacin}, $\overline{L}_j$ is finitely generated. Now $\overline{L}_j$ is a finitely generated simple pseudofinite group---this is a contradiction in light of Theorem~\ref{fact:Point-and-Houcine-fiitely-gen}.  \qed

As $\overline{L}$ is finite there is an upper bound on the sizes $|\overline{S}_i|$, for almost all $i$. Therefore, by Fact~\ref{lemma:socle}, $\overline{G}$ is finite. This means that the subgroup $R(G)$ is a solvable finitely generated pseudofinite group, contradictory to Theorem~\ref{fact:Point-and-Houcine-fiitely-gen}. Theorem~\ref{th:finite-gene-fcd} is now proven. \qed

\addcontentsline{toc}{section}{References}    
\bibliographystyle{plain}
\bibliography{ulla2021}

\end{document}